\newcommand{\inv}{^{-1}}
\newcommand{\id}{\operatorname{id}}
\newcommand{\loc}{\operatorname{loc}}
\newcommand{\eps}{\varepsilon}
\newcommand{\R}{\mathbb R}
\newcommand{\bS}{\mathbb S}
\newcommand{\abs}[1]{\lvert#1\rvert}
\newcommand{\norm}[1]{\lVert#1\rVert}
\newtheorem{theorem}{Theorem}[section]
\newtheorem*{theorem*}{Theorem}{\bf}{\it}
\newtheorem{proposition}[theorem]{Proposition}
\newtheorem*{proposition*}{Proposition}{\bf}{\it}
\newtheorem{lemma}[theorem]{Lemma}
\newtheorem*{lemma*}{Lemma}{\bf}{\it}
\newtheorem{corollary}[theorem]{Corollary}
\theoremstyle{definition}
\newtheorem{definition}[theorem]{Definition}
\newtheorem*{definition*}{Definition}
\theoremstyle{remark}
\newtheorem{remark}[theorem]{Remark}
\numberwithin{equation}{section}
\numberwithin{equation}{section}
\begin{document}

\title[Compactness of branch]{Mappings of finite distortion: compactness of the branch set}

\author{Aapo Kauranen}
\address{Department of Mathematical Analysis, Sokolovska 83, Praha 8, 186 75, Charles University in Prague \and 
Department de Matem\`atiques, Universitat Aut\`onoma de Barcelona, 08193, 
Bellaterra (Barcelona), Spain}
\email{aapo.p.kauranen@gmail.com}
\thanks{A.K.
acknowledges financial support from the Spanish Ministry of Economy and 
Competitiveness, through the ``Marí\'{\i}a de Maeztu'' Programme for Units of 
Excellence in R\&D (MDM-2014-0445) and MTM-2016-77635-P (MICINN, Spain).}

\author{Rami Luisto}
\address{Department of Mathematics and Statistics, P.O. Box 35, FI-40014 University of Jyv\"askyl\"a, Finland \and
Department of Mathematical Analysis, Sokolovska 83, Praha 8, 186 75, Charles University in Prague}
\email{rami.luisto@gmail.com}
\thanks{R.L.\
  was supported by the Finnish Academy of Science and Letters.
}

\author{Ville Tengvall}
\address{Department of Mathematics and Statistics, P.O. Box 35, FI-40014 University of Jyv\"askyl\"a, Finland}
\email{ville.tengvall@jyu.fi}
\thanks{V.T. was supported by the Academy of Finland Project 277923.}

\subjclass[2010]{57M12, 30C65}
\keywords{Quasiregular mappings, branch set, branched cover, mappings of finite distortion.}
\date{\today}

\begin{abstract}
  We show that an entire branched cover of finite distortion cannot have a compact branch set
  if its distortion satisfies a certain asymptotic growth condition.
  We furthermore show that this bound is 
  strict by constructing an entire, continuous, open and discrete mapping of finite distortion which is piecewise smooth,
  has a branch set homeomorphic to $(n-2)$-dimensional torus and distortion arbitrarily close to the asymptotic bound.
\end{abstract}

\maketitle

\section{Introduction}
\label{sec:Intro}

A mapping $f \in W^{1,1}(\Omega, \mathbb{R}^n)$, defined on an open set $\Omega \subset \mathbb{R}^n$ with $n \ge 2$, is called a \emph{mapping of finite distortion} if
\begin{itemize}
\item[(FO-1)] $J_f \in L_{\loc}^1(\Omega)$, and
\item[(FO-2)] $J_f > 0$ a.e. on the set where $\abs{Df(x)} > 0$,
\end{itemize}
where
$$\abs{Df(x)} \colonequals \sup_{\abs{v}=1} \abs{Df(x)v} \quad \text{and} \quad J_{f}(x) \colonequals \det Df(x)$$ 
are the operator norm and the Jacobian determinant of the differential matrix $Df(x)$, respectively. With such a mapping we associate a Lebesgue measurable \emph{outer distortion function} $K_O(\cdot, f) \colon \Omega \to [1, \infty]$ defined as follows
\begin{align*}
&K_{O}(x,f) = \left\{ \begin{array}{ll}
\frac{\abs{Df(x)}^{n}}{J_{f}(x)}, & \textrm{if $J_{f}(x)>0$}\\
1, & \textrm{otherwise.}
\end{array} \right.
\end{align*}
If the outer distortion function is essentially bounded we call the mapping \emph{quasiregular}. In addition, homeomorphic quasiregular mappings are called \emph{quasiconformal mappings}. For the basic background of these three different mapping classes we recommend the monographs \cite{Astala-Iwaniec-Martin, HenclKoskela, IwaniecMartin, Rickman-book, VaisalaBook, Vuorinen} for the reader. In this paper we are especially interested in the local and global injectivity of entire quasiregular mappings and mapping of finite distortion.

A basic example of an entire quasiregular mapping which is not quasiconformal is the \emph{winding mapping} $\omega \colon \mathbb{R}^n \to \mathbb{R}^n$ defined in cylindrical coordinates by the formula
\begin{align*}
\omega(r\cos \theta, r\sin \theta, x_3, \ldots, x_n) = (r\cos 2\theta, r\sin 2\theta, x_3, \ldots, x_n),
\end{align*}
where $r \ge 0$ and $\theta \in \mathbb{R}$. For this mapping we see that its \emph{branch set}
\begin{align*}
B_{\omega} = \{ x \in \mathbb{R}^n : x_1 = 0, x_2 = 0 \} \, ,
\end{align*}
id est, the set where the mapping fails to be a local homeomorphism, is nonempty and therefore the winding mapping cannot be neither a global homeomorphism nor a quasiconformal mapping. On the other hand, by Zorich's global homeomorphism theorem \cite{Zorich} an entire quasiregular mapping with an empty branch set is always a quasiconformal mapping. Thus, the study of quasiconformality of entire quasiregular mappings can be reduced to the study of their branch sets.

More generally, the structure of the branch set of a mapping is connected to the topology and geometry of the mapping itself. 
The branch set is of particular interest
in the study of \emph{branched covers}, i.e.\ continuous, open and discrete mappings; see e.g.\ 
\cite{ChurchHemmingsen1,ChurchHemmingsen2,ChurchHemmingsen3}, \cite{Edmonds78,Edmonds79,BernsteinEdmonds78,BernsteinEdmonds78} 
and \cite{AaltonenPankka}.
Note that all nonconstant quasiregular mappings are branched covers by the fundamental theorem of Reshetnyak, see e.g. \cite[Theorem I.4.1]{Rickman-book}. Also nonconstant mappings of finite distortion are branched covers under certain exponential integrability conditions on the distortion function, see \cite{HenclKoskela}. Alternatively, one may replace the assumption on the exponentially integrable distortion by a suitable $L^{p}$-integrability condition on the distortion function as long as the Sobolev regularity of the mapping is high enough, see again \cite{HenclKoskela}. For
an overview on this topic, see Hencl and Rajala \cite{HenclRajala-Survey}. 

When we study branched covers it is natural to ask whether we can describe what are the allowable branch sets for this class of mappings. For this, one would need to provide, at least, a comprehensive description on the topological and geometric structure of these sets. In dimension two this question is quite well understood: by the classical Sto\"ilow theorem, see e.g.\ \cite{Stoilow} or \cite{LuistoPankka-Stoilow}, the branch set of a planar branched cover is a discrete set. In higher dimensions the Chernavski\u{\i}-V\"ais\"al\"a theorem, see \ \cite{Ch1, Ch2, Vaisala}, states that the branch set of a branched cover between two $n$-manifolds has topological dimension of at most $n-2$. The 
sharpness of this result is not known in all dimensions and even the following
conjecture of Church and Hemmingsen from \cite{ChurchHemmingsen1} remains open: 
\begin{flushleft}
\emph{The branch set of a branched cover between 3-manifolds has
topological dimension one.} 
\end{flushleft}
The solution to this conjecture by itself would not provide us a description on the allowable branch sets, but it would narrow down
the possible behaviour of branched covers.
Partial results and new approaches in this topological setting have been recently obtained, for instance, in the work by Aaltonen and Pankka \cite{AaltonenPankka}.

Even though, for mappings of finite distortion, and especially for quasiregular mappings, more 
properties of the branch set are known, much of the finer details
remain unsolved and are subject to much interest in the field. Especially, the above mentioned question on the topological and geometric structure of the branch set has remained open also for these mapping classes.
In this paper we study the question of allowable branch sets
in the context of entire quasiregular mappings and branched covers with finite distortion between Euclidean spaces of dimension $n \ge 3$. In addition, we will provide certain Zorich-type global homeomorphism theorems for these mapping classes. The main motivation and inspiration to our study comes from the following question of Heinonen which he stated in his ICM address \cite[Section 3]{HeinonenICM}: 
\begin{flushleft}
\emph{Can we describe the geometry and the topology of the allowable branch sets of quasiregular
mappings between metric $n$-manifolds?}
\end{flushleft}

Our first result says that under a sublogarithmic bound on the outer distortion function the
branch set of a branched cover between Euclidean spaces has to be either empty or unbounded. In particular, it follows from this result that compact sets are not allowable branch sets for branched covers with a sublogarithmically growing outer distortion function.

\begin{theorem}\label{thm:Main}
  Let $f \colon \R^n \to \R^n$ be a branched cover with $n \ge 3$.
  Suppose that for every constant $C > 0$ there exists a radius $r>0$
  such that the restriction $f_{r} \colonequals f|_{\R^n \setminus \overline{B^n}(0,r)}$ of the mapping $f$ is a mapping of finite distortion
  that satisfies
  \begin{align*}
    K_O(x,f_r) \leq C\log(|x|) \quad \text{ for a.e.\ }x \in \R^n \setminus \overline{B^n}(0,r).
  \end{align*}
  Then the branch set of $f$ is either empty or unbounded.
\end{theorem}
Note that a mapping satisfying the assumptions of Theorem~\ref{thm:Main} with an empty branch set is actually a global homeomorphism. We explain the details of this Zorich-type global homeomorphism result in Remark \ref{rmk:GlobalHomeomorphicity}. We also point out that the assumption of the mapping being a branched cover in Theorem~\ref{thm:Main} can be replaced by a suitable integrability condition on the distortion function. For further discussion in this direction, see Section~\ref{MFDSectionOpenAndDiscrete}. 

As every entire quasiregular mapping satisfies the assumptions of Theorem~\ref{thm:Main} we also get the following corollary which says that the nonempty, compact sets are not allowable branch sets for entire quasiregular mappings in dimension $n \ge 3$: 
\begin{corollary}\label{coro:Main}
  Let $f \colon \R^n \to \R^n$ be a quasiregular mapping with $n \ge 3$. Then the branch set $B_f$ is either
  empty or unbounded.
\end{corollary}
It seems that Corollary~\ref{coro:Main} has gone unnoticed in the literature. However,
by using a M\"{o}bius transformation that reflects the space with respect to the unit sphere
the existence of a compact branch set translates into a question about the removability of singularities for local homeomorphisms.
By arguing this way, Corollary~\ref{coro:Main} can be deduced also from a theorem of Martio, Rickman and V\"ais\"al\"a \cite[Theorem 3.15]{MRV}.

In Section \ref{sec:construction} we show that Theorem \ref{thm:Main} is sharp
in the logarithmic scale. We record the construction of our example as the following theorem:
\begin{theorem}\label{thm:Example}
  Let $n \ge 3$ and $\varepsilon > 0$.
  Then there exists a piecewise smooth branched cover
  $f \colon \R^n \to \R^n$ of finite distortion and a constant $C \ge 1$ such that 
  \begin{align*}
  K_O(x,f) \le C \max\{1,\log^{1+\eps}(|x|)\} \quad \text{for all $x \in \mathbb{R}^n$},
  \end{align*} 
  and such that the branch set of $f$ is homeomorphic to the $(n-2)$-dimensional torus
  $(\bS^1)^{n-2} \times \{ 0 \} = \bS^1 \times \cdots \times \bS^1 \times \{ 0 \}$.
\end{theorem}
Note that by Theorem~\ref{thm:Example} the topological $(n-2)$-dimensional tori are allowable branch sets for entire branched covers. Furthermore, by modifying the construction one can create a large class of other type of branch sets for branched covers between Euclidean spaces, see Section~\ref{FinalRemarks}. One could think of that this kind of examples of allowable branch sets would arise easily by modifying some known examples of branched covers between spheres (i.e.\ branched covers from $\bS^n$ to $\bS^n$). However, it seems that in terms of the complexity of the branch sets there are more examples of branched covers between spheres than between Euclidean spaces, see e.g.\ \cite{HeinonenRickman-S3, HeinonenRickman, Rickman85}. Especially, it turns out that the examples constructed between spheres do not easily yield examples of similar type of mappings between Euclidean spaces; we will elaborate on this topic in Remark \ref{remark:Counter}. From this point of view, one aim with this note is to expand the collection of examples of branched covers with new type of allowable branch sets.

Finally, we want to mention the connection of our results to two well-known open questions from the theory of quasiregular mappings. The first question goes back to Heinonen and Rickman \cite{HeinonenRickman-S3} and is the following:
\begin{flushleft}
\emph{Let $f \colon \bS^3 \to \bS^3$ be a branched cover. Does there exist homeomorphisms $h_1,h_2 \colon \bS^3 \to \bS^3$ such that $h_1 \circ f \circ h_2$ is a quasiregular mapping?}
\end{flushleft}
For Euclidean spaces the corresponding question is trivial. Indeed, by the Zorich theorem,
see \cite[Corollary III.3.8]{Rickman-book}, an entire quasiregular mapping with an empty branch set
is a global homeomorphism. On the other hand, it is easy to construct a locally homeomorphic, entire branched cover which is not globally injective, see e.g.\ Remark \ref{rmk:GlobalHomeomorphicity}. The mapping constructed in Section \ref{sec:construction}, together with Theorem \ref{thm:Main} does give, 
however, a more involved example. Indeed, our construction gives a branched cover of finite distortion between Euclidean spaces which is not 
reparametrizable to a quasiregular mapping, even though, the restriction of the mapping to every open ball defines a quasiregular mapping.

The second question we want to mention is the well-known open problem of Vuorinen (see \cite[Remarks 3.7]{Vuorinen2} and \cite[p.193, (4)]{Vuorinen}) which asks if a compact set can be an allowable branch set for proper quasiregular mappings defined on an open ball of dimension $n \ge 3$:
\begin{flushleft}
\emph{Let $f \colon B^n(0,1) \to \R^n$ be a proper quasiregular mapping with $n \geq 3$. Can the branch set $B_f$ of such a mapping $f$ be compact and non-empty?}
\end{flushleft}
The construction we give in Theorem~\ref{thm:Example} easily yield a quasiregular mapping of an open ball with a compact branch set. However, it is easy to check that this restriction is not a proper mapping. In addition, as \cite[Remarks 3.7]{Vuorinen2} suggest there is not an easy way to fix this problem as the boundary behaviour of a possible counter example has to be somehow pathological. Therefore, the question of Vuorinen remains open.

\section{Preliminaries}\label{sec:Preli}

\subsection{Notation}
A point $x \in \mathbb{R}^n$ is denoted in coordinates by $(x_1, x_2, \ldots, x_n)$ and its Euclidean norm is denoted by $\norm{x} \colonequals \sqrt{\sum_{i=1}^n x_i^2}$. We denote by $\overline{\R^n}$ the one point compactification of $\R^n$ and we identify this set with the $n$-dimensional \emph{sphere} 
$$\bS^n \colonequals \{ x \in \mathbb{R}^{n+1} : \norm{x} = 1 \}.$$ 
In addition,
$$B^{n}(a,r) \colonequals \{ x \in \mathbb{R}^n : \norm{x-a} < r \}$$
denotes the $n$-dimensional \emph{open ball} of radius $r>0$, centered at $a \in \mathbb{R}^n$. The corresponding \emph{closed ball} is denoted by
$$\overline{B^{n}}(a,r) \colonequals \{ x \in \mathbb{R}^n : \norm{x-a} \le r \}.$$
We will omit the superscript $n$ whenever the dimension is clear from context.
More generally, for an arbitrary set $E \subset \mathbb{R}^n$ we denote by $\overline{E}$ the closure of $E$ and $\partial E$ denotes the topological boundary of this set. 

A positive constant which is depending only on the parameters $p_1, \ldots, p_k$ is denoted by $C \colonequals C(p_1, \ldots, p_k)$. The constant $C$ might change from line to line. Furthermore, for a given function $g \colon \mathbb{R}^n \to [0, \infty)$ we denote
\begin{align*}
o(g) \colonequals \{ f \colon  \mathbb{R}^{n} &\to [0, \infty) : \text{for every $\varepsilon > 0$ there is $C(\varepsilon)\ge 0$} \\
&\text{such that $f(x) \le \varepsilon g(x)$ whenever $\abs{x} \ge C(\varepsilon)$} \}.
\end{align*}
If $f \in o(g)$, we also denote $f \sim o(g)$ or $f = o(g)$.

The \emph{restriction} of a mapping $f \colon  D \to \tilde{D}$ (from $D$ to $\tilde{D}$) to a given set $E \subset D$ is denoted by $f|_E$. 

\subsection{Branched covers and branch sets}

A mapping $f \colon  \Omega \to \mathbb{R}^n$, which is defined on an open set $\Omega \subset \mathbb{R}^n$ with $n \ge 2$, is called
\begin{itemize}
\item \emph{open} if it maps open sets in $\Omega$ to open sets in $\mathbb{R}^n$.
\item \emph{discrete} if for every $y \in \mathbb{R}^n$ the set $f^{-1}(y)$ of pre-images is a discrete set in $\Omega$.
\item \emph{branched cover} if it is continuous, discrete and open.
\item \emph{entire} if the mapping is defined in the whole space, i.e., $\Omega = \mathbb{R}^n$.
\item \emph{proper} if the pre-image of every compact set is a compact set for the mapping $f$.
\end{itemize}
A point $x \in \Omega$ is called a \emph{branch point} of the mapping $f$ if for every open set $U \subset \Omega$ which contains the point $x$ the restriction mapping $f|_U : U \to f(U)$ fails to be a homeomorphisms, in other words, the mapping $f$ fails to be a local homeomorphism at $x$. Note that for continuous and open mappings non-homeomorphicity occurs precisely when the mapping fails to be locally injective. The set
\begin{align*}
B_f \colonequals \{ x \in \Omega : \text{$x$ is a branch point of $f$} \}
\end{align*}
of all branch points of $f$ is called the \emph{branch set} of $f$.

\subsection{Mappings of finite inner distortion}\label{MFDSection}
For the definition and basic properties of  Sobolev spaces we refer to \cite{EvansGariepy}.
A mapping $f \in W_{\loc}^{1,1}(\Omega, \mathbb{R}^n)$, defined on an open set $\Omega \subset \mathbb{R}^n$ with $n \ge 2$, is called a \emph{mapping of finite inner distortion} if
\begin{itemize}
\item[(FI-1)] $J_f \in L_{\loc}^1(\Omega)$, and
\item[(FI-2)] $J_f > 0$ a.e. on the set where $\abs{D^{\sharp}f(x)} > 0$,
\end{itemize}
where $D^{\sharp}f(x)$ stands for the \emph{adjugate matrix} of the matrix $Df(x)$, i.e., the transpose of the cofactor matrix of $Df(x)$, and $\abs{D^{\sharp}f(x)}$ denotes the operator norm of $D^{\sharp}f(x)$.

With every mapping $f \colon  \Omega \to \mathbb{R}^n$ of finite inner distortion we associate a Lebesgue measurable \emph{inner distortion function} $K_I(\cdot, f)\colon \Omega \to [1, \infty]$ defined as follows
\begin{align*}
&K_{I}(x,f) = \left\{ \begin{array}{ll}
\frac{\abs{D^{\sharp}f(x)}^{n}}{J_{f}(x)^{n-1}}, & \textrm{if $J_{f}(x)>0$}\\
1, & \textrm{otherwise.}
\end{array} \right.
\end{align*}
We point out that
\begin{itemize}
\item[(1)] in the planar case the classes of mappings of finite outer distortion and mappings of finite inner distortion coincide. Especially, in the planar case we have 
$$K_I(x,f) = K_O(x,f)$$ 
for almost every $x \in \Omega$.
\item[(2)] in the higher dimensions (i.e. $n \ge 3$) every mapping of finite outer distortion is a mapping of finite inner distortion but another direction is not true. Indeed, if $n \ge 3$ we may define $f \colon  \mathbb{R}^n \to \mathbb{R}^n$ by the formula
$$f(x_1,x_2, \ldots, x_n) = (x_1,0, \ldots, 0).$$
This mapping is obviously a mapping of finite inner distortion but not a mapping of finite outer distortion.
\item[(3)] For every mapping $f \colon  \Omega \to \mathbb{R}^n$ of finite outer distortion we have the following pointwise inequalities
\begin{align*}
K_{I}(x,f) \le K_O(x,f)^{n-1} \quad \text{and} \quad K_O(x,f) \le K_{I}(x,f)^{n-1}
\end{align*}
for almost every $x \in \Omega$.
\end{itemize}
For more details on the properties listed above, see e.g. \cite{HenclKoskela, IwaniecMartin}.
 
\subsection{Continuity, discreteness, and openness of mappings of finite distortion}\label{MFDSectionOpenAndDiscrete} 
Many of the regularity questions of mappings of finite distortion depend
on the level of the integrability of the distortion functions. Next we discuss about the sufficient conformality conditions for the continuity, discreteness, and openness of mappings of finite distortion. For this purpose, we recall the following definition from the literature, see e.g. \cite{KauhanenKoskelaMalyOnninenZhong, KoskelaOnninen-CapacityAndModulus}.

\begin{definition}[Condition $(A)$]
Let $\Omega \subset \mathbb{R}^n$ be a domain (i.e. an open, connected set) with $n \ge 2$. We say that a mapping $f \colon  \Omega \to \mathbb{R}^n$ of finite distortion \emph{satisfies the condition} $(A)$ if
\begin{align*}
\exp(\mathcal{A}(K_{O}(\cdot, f))) \in L_{\loc}^1(\Omega)
\end{align*}
for some continuously differentiable function $\mathcal{A} \colon [0,\infty) \to [0,\infty)$ with 
$$\mathcal{A}(0) = 0 \qquad \text{and} \quad \lim_{t \to \infty} \mathcal{A}(t) = \infty,$$ 
and such that the following conditions hold
\begin{enumerate}
\item[(A-1)] $\int_0^\infty \frac{\mathcal{A}'(t)}{t} \; dt = \infty$, and
\item[(A-2)] there exists $t_0$ such that $\mathcal{A}'(t)t$ increases to infinity for $t \geq t_0$.
\end{enumerate}
\end{definition}
Note that under the assumption of Theorem \ref{thm:Main}, the distortion of $f$ will
be locally bounded outside of a large ball. This especially implies that $f$ satisfies condition $(A)$ outside
large balls as we may choose $\mathcal{A}(t) = t$. The following theorem is a special case of 
\cite[Theorem 1.1.]{KauhanenKoskelaMalyOnninenZhong} and implies that in the statement of 
Theorem \ref{thm:Main} we could replace the requirement of $f$ being a branched cover by requiring
that the mapping is of finite distortion satisfying condition $(A)$.
\begin{theorem}\label{thm:MFD-Reshetnyak}
  Let $f \colon \R^n \to \R^n$, $n \ge 2$, be a mapping of finite distortion satisfying condition $(A)$.
  Then $f$ is either constant or a branched cover.
\end{theorem}
As it was suggested in Section~\ref{sec:Intro}, we could replace the condition $(A)$ in Theorem~\ref{thm:MFD-Reshetnyak} by a suitable $L^{p}$-integrability condition on the distortion function as long as the Sobolev regularity of the mapping is sufficiently high, 
see e.g. \cite[Theorem 3.4]{HenclKoskela}.

\subsection{Paths and the modulus of path families}

For an interval $I \subset \R$ and a continuous path $\gamma \colon I \to \R^n$
we denote the image of the path by $\abs{\gamma}$ and its closure by $\overline{\abs{\gamma}}$.
For an entire and continuous mapping $f \colon \mathbb{R}^n \to \mathbb{R}^n$ we say that a 
point $y_0 \in \R^n$ is \emph{an asymptotic value of $f$} if there exists a path
$\beta \colon [0,\infty) \to \R^n$ such that 
$$\lim_{t \to \infty} f(\beta(t)) = y_0 \quad \text{and} \quad \lim_{t \to \infty} \| \beta(t) \| = \infty.$$

For two collection of paths, $\Gamma_1$ and $\Gamma_2$, we denote $\Gamma_1 \leq \Gamma_2$ if
for any path $\alpha \in \Gamma_1$ there exists a path $\beta \in \Gamma_2$ such that 
$\alpha$ is the restriction of $\beta$ to some subinterval of the domain of $\beta$.

The \emph{modulus} of a path family is an outer measure on the space of all 
paths defined in a given space. We will require both the
weighted modulus and the more classical conformal modulus.
We give the definition here and refer
the reader to \cite{Rickman-book, VaisalaBook} for basic properties of the conformal
modulus and to \cite{KoskelaOnninen-CapacityAndModulus} for basic properties of the weighted modulus.
\begin{definition}
  Suppose that $U \subset \mathbb{R}^n$ is a Borel set with $n \ge 2$. For a collection $\Gamma$ of paths 
  $\gamma \colon (0,1) \to U$ 
  and a given nonnegative function $w \in L_{\loc}^{1}(U)$ we set
  the \emph{$w$-weighted modulus of the path family $\Gamma$} to be
  \begin{align*}
    M_w(\Gamma)
    = \inf_{\rho} \int_{U} \rho^n(x) \, w(x) \, dx,
  \end{align*}
  where the infimum is taken over all Borel functions $\rho \colon U \to [0,\infty]$
  such that for the line integral of $\rho$ along every path $\gamma \in \Gamma$ we have 
  $$\int_{\gamma} \rho \geq 1.$$ 

  For $w \equiv 1$ we denote $M(\Gamma) \colonequals M_w(\Gamma)$ and 
  call $M(\Gamma)$ the \emph{conformal modulus of $\Gamma$}.
\end{definition}

For a mapping $f$ of finite distortion the relations between
the modulus of a path family $\Gamma$ and the modulus of its image family $f(\Gamma) \colonequals \{ f \circ \gamma : \gamma \in \Gamma \}$ are strongly influenced by the distortion properties of the mapping and vice versa. For us the crucial result
is the Poletsky inequality which gives a connection between the conformal modulus $M(f(\Gamma))$ and the weighted modulus $M_{K_{I}(\cdot, f)}(\Gamma)$. The following theorem is a special case of
\cite[Theorem 4.1]{KoskelaOnninen-CapacityAndModulus}:
\begin{theorem}\label{thm:MFD-modulus}
  Let $\Omega \subset \mathbb{R}^n$ be a domain with $n \ge 2$. Let $f \colon \Omega \to \R^n$ be a mapping of finite distortion
  satisfying condition (A). Then there exists a constant $C \geq 1$ such that
  \begin{align*}
    M(f(\Gamma))
    \leq C M_{K_I(\cdot, f)}(\Gamma) \, ,
  \end{align*}
  for any family $\Gamma$ of paths in $\Omega$.
\end{theorem}

Finally, an important tool for our study is a theorem by Agard and Marden in \cite{Agard-Marden} which characterizes 
removable isolated singularities of local homeomorphisms through a certain modulus condition.
Following \cite{Agard-Marden} we say that a continuous mapping $f \colon 
B^n(0,1) \setminus \{ 0 \} \to \R^n$
\emph{satisfies the modulus condition at the origin} if 
for any family of paths $\Gamma_0$ in $B^n(0,1)$ such that $0 \in \overline{|\gamma|}$ for all $\gamma \in \Gamma_0$
we have $M(f \Gamma_0) = 0$.
Note that e.g.\ all quasiregular mappings satisfy the modulus condition by the 
Poletsky-V\"ais\"al\"a inequalities; see \cite[Theorems II.8.1 and II.9.1]{Rickman-book}.
The following theorem is from \cite{Agard-Marden}.
\begin{theorem}\label{thm:AgardMarden}
  Let $f \colon B^n(0,1) \setminus \{ 0 \} \to \R^n$ be a local homeomorphism.
  Then $f$ extends as a local homeomorphism to the whole ball $B^n(0,1)$ if
  and only if $f$ satisfies the modulus condition at the origin.
\end{theorem}

\section{Proof of Theorem \ref{thm:Main}}
\label{sec:Main}

Our proof of Theorem \ref{thm:Main} relies on the fact that if a branched cover $f \colon \R^n \to \R^n$,
$n \geq 3$, extends
to a mapping $\hat f \colon \bS^n \to \bS^n$, the branch set of the original mapping cannot
be nonempty and unbounded.
The following result is well known to the experts in the field, but we have not
seen it explicitly stated in the literature. We give a short proof for the convenience
of the reader.
\begin{proposition}\label{prop:PolyBranch}
  Let $n \geq 3 $ and suppose $f \colon \R^n \to \R^n$ is a branched cover
  that extends to a continuous mapping $\hat f \colon \bS^n \to \bS^n$.
  Then the branch set of $f$ is either empty or unbounded.
\end{proposition}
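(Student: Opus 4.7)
The plan is to use the hypothesis that $f$ maps $\R^n$ into $\R^n$ to pin down the behavior of $\hat f$ at the added point $\infty \in \bS^n$. First I would observe that $\hat f^{-1}(\{\infty\}) = \{\infty\}$: the inclusion $\hat f^{-1}(\{\infty\}) \subset \{\infty\}$ is forced by $f(\R^n) \subset \R^n$, while nonemptiness follows because $\hat f$ is a non-constant open map of the compact connected space $\bS^n$ and hence surjective. Since $\hat f$ is a branched cover between compact $n$-manifolds, its topological degree $d = \deg(\hat f)$ is a well-defined positive integer, and the sum of local indices over any fiber equals $d$. Thus the local index at $\infty$ is $d$. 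If $d = 1$ then $\hat f$ is a homeomorphism, $B_{\hat f} = \emptyset$, and we are done; otherwise $d \geq 2$ and $\infty \in B_{\hat f}$.

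Next I would assume for a contradiction that $B_f$ is bounded. Because $B_{\hat f}$ is closed in $\bS^n$ (its complement is the open set on which $\hat f$ is a local homeomorphism) and $B_{\hat f} \cap \R^n = B_f$, the boundedness of $B_f$ implies that $\infty$ is an isolated point of $B_{\hat f}$ in $\bS^n$.

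The core of the argument is then to show that, for $n \geq 3$, the branch set of a branched cover has no isolated points. To do this I would invoke the local normal-neighborhood theory for discrete open maps: one can find an open connected neighborhood $U$ of $\infty$ with $\overline{U} \cap \hat f^{-1}(\hat f(\infty)) = \{\infty\}$ on which $\hat f \colon U \to V := \hat f(U)$ is a proper branched cover, with $V$ a small coordinate ball around $\hat f(\infty)$. Shrinking $U$ so that it meets $B_{\hat f}$ only at $\infty$, the restriction $\hat f \colon U \setminus \{\infty\} \to V \setminus \{\hat f(\infty)\}$ is a proper local homeomorphism, hence a $d$-sheeted covering. Since $V \setminus \{\hat f(\infty)\}$ is homotopy equivalent to $\bS^{n-1}$, which is simply connected for $n \geq 3$, this covering must be trivial, forcing $d = 1$ and a contradiction. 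The main technical point is the invocation of the normal-neighborhood theorem for discrete open maps (V\"ais\"al\"a); once this is in hand, the conclusion follows from the elementary fundamental-group computation.
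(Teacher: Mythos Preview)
Your proof is correct and follows the same overall architecture as the paper's: show via degree considerations that $\hat f^{-1}(\infty)=\{\infty\}$ forces $\infty\in B_{\hat f}$ when $B_f\neq\emptyset$, observe that boundedness of $B_f$ would make $\infty$ an isolated point of $B_{\hat f}$, and then derive a contradiction from the fact that $B_{\hat f}$ has no isolated points when $n\ge 3$.

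The only substantive difference is in how that last fact is obtained. The paper simply cites Church--Hemmingsen \cite[Corollary~5.5]{ChurchHemmingsen1}. You instead reprove the needed special case directly: a normal neighborhood $U$ of $\infty$ with $U\cap B_{\hat f}=\{\infty\}$ gives a proper local homeomorphism $U\setminus\{\infty\}\to V\setminus\{\hat f(\infty)\}$, hence a covering of a punctured ball; since $\bS^{n-1}$ is simply connected for $n\ge 3$, the covering is one-sheeted and the local index is $1$, a contradiction. This is a clean, self-contained argument relying only on the normal-domain machinery already used elsewhere in the paper (e.g.\ \cite[Lemma~I.4.9]{Rickman-book}), at the cost of a few extra lines. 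Both routes make the dimensional restriction $n\ge 3$ essential, as it must be: $z\mapsto z^2$ on $\R^2$ extends to $\bS^2$ with $B_f=\{0\}$.
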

\begin{proof}
  Suppose $B_f \neq \emptyset$.
  Since $f$ is a branched cover, so is the extension $\hat f \colon \bS^n \to \bS^n$.
  The mapping $\hat f$ is a proper branched cover between $n$-spheres,
  so all points outside $\hat{f}B_{\hat f}$ have equal amount of pre-images
  by topological degree theory, see e.g.\ \cite[Proposition I.4.10]{Rickman-book}.
  Since $B_f \neq \emptyset$, the mapping $f$ is not locally injective.
  Thus at least one point in $f(\R^n)$ has at least two pre-images under $f$ and so also under $\hat f$.
  Since $\# \hat f \inv \{\infty\} = \# \{ \infty \} = 1$, this implies
  that $B_{\hat f} \ni \infty$. But as $B_f$ is bounded,
  $\infty$ is then an isolated point of $B_{\hat f}$. This is a contradiction since $B_{\hat f}$ cannot have
  isolated points in dimensions three and above by the classical result of Church and Hemmingsen \cite[Corollary 5.6]{ChurchHemmingsen1}.
\end{proof}

Thus to forbid compact branch sets for a branched cover $\R^n \to \R^n$ it suffices to show that the mapping extends to
$\bS^n \to \bS^n$, i.e., that there are no asymptotic values. Another way to formulate
this is to say that the infinity point needs to be a removable singularity. By using
a M\"{o}bius transformation $\R^n\setminus\{0\} \to \R^n\setminus\{0\}$
that reflects the space with respect to the unit sphere
and the result of Agard and Marden, Theorem \ref{thm:AgardMarden}, we
see that it suffices to study the modulus of $f(\Gamma_\infty)$, where
$\Gamma_\infty$ is the collection of all paths going to infinity.
We record this observation as the following proposition.
\begin{proposition}\label{prop:Mechanical}
  Let $f \colon \R^n \to \R^n$ be a branched cover and 
  $\Gamma_\infty$ the collection of paths in $\R^n$ 
  that are not contained in any compact subset of $\R^n$.
  If $M(f(\Gamma_\infty)) = 0$, then the branch
  set of $f$ is either empty or unbounded.
\end{proposition}
\begin{proof}
  Towards contradiction let us 
  assume that $B_f$ is bounded and non-empty. By linearly rescaling the domain we may assume
  that $B_f \subset B(0,1)$. Let $h$ be the conformal reflection map
  \begin{align*}
    h \colon 
    B(0,1) \setminus \{ 0 \} 
    \to 
    \R^n \setminus \overline{B}(0,1) ,
    \quad 
    h(x) = \frac{x}{\| x \|^2}
  \end{align*}
  and denote
  \begin{align*}
    g \colonequals  (f|_{\R^n \setminus \overline{B}(0,1)}) \circ h
    \colon B(0,1) \setminus \{ 0 \} \to \R^n.
  \end{align*}
  Since $B_f \subset B(0,1)$ and the mapping
  $h$ is a homeomorphism, the mapping $g$ is a local homeomorphism.
  Furthermore, for the collection $\Gamma$ of non-constant paths in $B(0,1)\setminus \{ 0 \}$ containing
  the origin in their closure, $h(\Gamma) \subset \Gamma_\infty$ and so
  \begin{align*}
    M(g(\Gamma)) 
    = M(f(h(\Gamma)))
    \leq M(f(\Gamma_\infty)) 
    = 0.
  \end{align*}
  Thus by Theorem \ref{thm:AgardMarden} the local homeomorphism
  $g$ extends to a local homeomorphism $\hat g \colon B(0,1) \to \R^n$.
  This implies that the original mapping $f$ extends to a continuous mapping
  between $n$-spheres, which is a contradiction with Proposition \ref{prop:PolyBranch}.
  Thus the original claim holds true.
\end{proof}

By the previous Proposition \ref{prop:Mechanical},
in order to prove our main theorem it suffices 
to show that under the assumptions of Theorem \ref{thm:Main}
we have $M(f(\Gamma_\infty))=0$. 
Since mappings of finite inner distortion are of independent interest
to many people in the field, we prove this crucial property 
in the form of the following more general proposition.
\begin{proposition}\label{prop:ModulusIntegration}
  Suppose that a mapping $f\colon \mathbb{R}^{n} \to \mathbb{R}^n$ satisfies one of the following two conditions:
  \begin{itemize}
  \item[(i)] mapping $f$ is a mapping of finite inner distortion with 
    $$K_{I}(\cdot, f) \sim o\bigl(\log^{n-1}(\abs{x})\bigr).$$

  \item[(ii)] mapping $f$ is a mapping of finite distortion with 
  $$K_{O}(\cdot, f) \sim o\bigl(\log(\abs{x})\bigr).$$ 
  
  \end{itemize}
  Then, if the Poletsky inequality
  \begin{align}\label{PoletskyAssumption}
    M(f(\Gamma))
    \leq C M_{K_{I}(\cdot,f)}(\Gamma)
  \end{align}
  holds for $f$ and for every path family $\Gamma$ in $\mathbb{R}^n$ and for some absolute constant $C>0$, we have $M(f(\Gamma_{\infty})) = 0$.
\end{proposition}
\begin{proof}
Let $\varepsilon>0.$
Let us consider an increasing sequence $\{ r_{i} \}_{i=1}^{\infty}$ of radii 
tending to infinity and such that $r_1 > 1$ and $r_{i+1}\geq r_i^2.$ The precise 
values of $r_i$ will be fixed in a moment.
\begin{comment}

With the help of this 
sequence we fix another sequence $\{ R_{i} \}_{i=1}^{\infty}$ of radii such that
\begin{align}\label{AVeryBigRadius}
R_{i} > \max \Bigl{\{} r_{i}^{\frac{n}{n-1}}, \exp\Bigl( \bigl( \tfrac{2^i}{\varepsilon}\bigr)^{\tfrac{1}{(n-1)\eta}} \Bigr)  \Bigr{\}}. 
\end{align}
 
\end{comment}

We denote by $\Gamma_{i}$ the path family of all paths $\gamma$ connecting 
$\partial B(0, r_{i})$ to $\partial B(0, r_{i+1})$ in $A_{i} \colonequals 
B(0,r_{i+1}) \setminus \overline{B(0,r_{i})}$, and we consider the admissible 
test function $\rho_{i} \colon \mathbb{R}^n \to [0,\infty]$ for $\Gamma_{i}$ 
defined as follows 
\begin{displaymath}
\rho_{i}(x) = \left\{ \begin{array}{ll}
\bigl(\log \frac{r_{i+1}}{r_{i}}\bigr)^{-1} \frac{1}{\abs{x}}, & \textrm{if 
$r_{i} < \abs{x} < r_{i+1}$}\\
0, & \textrm{otherwise.}
\end{array} \right.
\end{displaymath}
Then we observe the following:
\begin{itemize}
\item[(1)] Under the assumption $(i)$ of Propostion \ref{prop:ModulusIntegration} 
we may choose $r_i$ such that  $$K_I(x,f)\leq 
\frac{\varepsilon}{2^i}\log^{n-1}\abs{x}$$
in $A_i.$
 Then it follows that
\begin{align}\label{ModuliEst}
  M_{K_{I}(\cdot, f)}(\Gamma_{i}) &\le \int_{\R^n} \rho^n_i(x) K_{I}(x,f) \, dx \\
                                  &\le \frac{\varepsilon}{2^i\log^{n}\frac{r_{i+1}}{r_{i}}} 
                                    \int_{r_{i}}^{r_{i+1}}\int_{S^{n-1}(0,t)} \frac{\log^{n-1}t}{t^{n}} \, 
                                    dt \nonumber\\
                                  &=  \frac{C\varepsilon}{2^i\log^n \frac{r_{i+1}}{r_{i}}} \bigl( 
                                    \log^{n}r_{i+1} - \log^{n}r_{i} \bigr) \nonumber\\
&\le  \frac{C\varepsilon}{2^i}, \nonumber
\end{align}
for all large values of $i \in \mathbb{N}$. 

\item[(2)] Similarly, under  the assumption $(ii)$ of Propostion 
\ref{prop:ModulusIntegration} we may choose $r_i$ such that

$$K_{O}(x,f) \le \frac{\varepsilon}{2^i} \log\abs{x}$$ 
for all $x\in A_i$. Thus, by using the pointwise inequality $K_{I}(x,f) \le 
K_{O}(x,f)^{n-1}$ for almost every $x$ and 
imitating the calculations in \eqref{ModuliEst} we get again
\begin{align}\label{ModuliEst2}
M_{K_{I}(\cdot, f)}(\Gamma_{i}) \le M_{K_{O}(\cdot, f)^{n-1}}(\Gamma_{i}) \le  
\frac{C\varepsilon}{2^i},
\end{align}
for all large values of $i \in \mathbb{N}$.
\end{itemize}
\begin{comment}

By  combining \eqref{ModuliEst}--\eqref{ModuliEst2} and the assumption 
\eqref{AVeryBigRadius} on the radii $R_i$ we observe that under the assumptions 
of Propostion~\ref{prop:ModulusIntegration} we may find $N \in \mathbb{N}$ such 
that
\begin{align}\label{ModuliEst3}
M_{K_{I}(\cdot, f)}(\Gamma_{i}) \le C \frac{\log^{n-(n-1)\eta}R_{i}}{\log^n \frac{R_{i}}{r_{i}}} &\le  C \frac{\log^{n-(n-1)\eta}R_{i}}{\log^n R_{i}} \\
&= C  \log^{-(n-1)\eta}R_{i} < C \frac{\varepsilon}{2^{i}}, \nonumber
\end{align}
for all $i \ge N$ and for some absolute constant $C>0$.
 
\end{comment}

Next we observe that 
\begin{align}\label{Minorizing}
  \bigcup_{i =1}^{\infty} \Gamma_{i}
  \leq\Gamma_{\infty}.
\end{align}
Indeed, all the paths in $\Gamma_{\infty}$ are tending to infinity and therefore each path in $\Gamma_{\infty}$ necessarily intersects at least one of the annuli $A_{i}$ in such a way that it goes first inside $A_i$ by intersecting the inner boundary component of $A_i$ and then exists $A_i$ by intersecting the outer boundary component of $A_i$. Now, from \eqref{Minorizing} it is easy to conclude that
\begin{align}\label{Minorizing2}
  f\biggl(\bigcup_{i =1}^{\infty} \Gamma_{i}\biggr)
  \leq f(\Gamma_{\infty}).
\end{align}
Thus, by repeating the proofs of \cite[Theorem 6.4 and Theorem 
6.2.(3)]{VaisalaBook} with the $K_I$-weighted modulus and applying both
the assumption on the Poletsky inequality \eqref{PoletskyAssumption}, 
\eqref{ModuliEst} and \eqref{ModuliEst2} we see that
\begin{align*}
M(f(\Gamma_{\infty})) &\le M\Bigl( f \Bigl( \bigcup_i \Gamma_{i} \Bigr) \Bigr) \le \sum_{i=N}^{\infty} M(f(\Gamma_i)) \\ 
&\le \sum_{i=N}^{\infty} M_{K_{I}(\cdot, f)}(\Gamma_{i}) < C \sum_{i=N}^{\infty} \frac{\varepsilon}{2^{i}} < C \varepsilon.
\end{align*}
Now, as the constant $C>0$ is independent on $\varepsilon> 0$, by letting $\varepsilon$ tend to zero we get $M(f(\Gamma_{\infty})) = 0$ and the claim follows.
\end{proof}

\begin{remark}
  As a special case of  the proof of Proposition 
\ref{prop:ModulusIntegration} 
  we obtain that the weighted modulus $M_w(\Gamma_\infty)$
  is zero when $w \sim o(\log)$. This notion can be expressed
  as saying that \emph{$\R^n$ is $w$-parabolic when $w \sim o(\log)$};
  see \cite{Pankka-MFD} and \cite{HolopainenPankka-MFD}.
\end{remark}

\begin{proof}[Proof of Theorem \ref{thm:Main}.]
  We may assume $f$ is non-constant, since otherwise $B_f = \R^n$.
  Property (i) in the statement of Proposition \ref{prop:ModulusIntegration}
  holds by assumption and Theorem \ref{thm:MFD-modulus}
  guarantees the required Poletsky inequality, so $M(f(\Gamma_\infty))=0$.
  Thus Proposition \ref{prop:Mechanical} implies that the branch set is 
  either empty or unbounded.
\end{proof}

\begin{proof}[Proof of Corollary \ref{coro:Main}.]
  The corollary follows either as a special case of
  Theorem \ref{thm:Main} or by directly combining Proposition \ref{prop:Mechanical}
  with standard modulus estimates for quasiregular mappings.
\end{proof}

\begin{remark}\label{rmk:GlobalHomeomorphicity}
  We note that the techniques of this section furthermore imply that
  under the assumptions of Theorem \ref{thm:Main}, if the branch set $B_f$ is empty,
  then $f$ is a homeomorphism. This an immediate corollary from the fact that 
  the only local homeomorphisms $\bS^n \to \bS^n$,
  $n\geq 2$, are globally injective by basic theory of covering spaces.
  This global injectivity observation is a special case of the
  results in \cite{HolopainenPankka-MFD}, where a Zorich-type
  global homeomorphism theorem is proved under certain
  parabolicity assumptions which are similar
  to our sublogarithmic distortion bounds. 
  
  Note that this observation is also in some sense strict in our setting.
  Define a diffeomorphism $h \colon \R \to (-\infty,0)$
  such that $h(x) = x-5$ for $x \leq 0$ and 
  $h(x) = x\log^{\frac{1+\eps}{n-1}}(x)$ for $x \geq 2$.
  Now the mapping
  \begin{align*}
    \R^n \to (-\infty,0) \times \R^{n-1},
    \quad
    (x_1,x_2,\ldots,x_n) \mapsto (h(x_1),x_2,\ldots,x_n)
  \end{align*}
  has outer distortion bounded by $C\log^{1+\eps}$ and inner distortion 
  bounded by $C \log^{(1+\eps)(n-1)}$,
  but a small locally bilipshitz postcomposition
  can be used to break global injectivity near the hyperplane $x_1 = 0$.
\end{remark}

\section{Branched cover with $\bS^1$ branch set}
\label{sec:construction}

In this section we construct for any $\eps>0$ a 
continuous, open and discrete mapping $\R^n \to \R^n$ which is piecewise smooth,
has a branch set homeomorphic to $(\bS^1)^{n-2}$ and distortion asymptotically 
bounded by $C\log^{1+\eps}$.
For clarity of the exposition we have divided the construction into three parts:
(a) defining a branched cover $F \colon \R^3 \to \R^3$ with a branch set homeomorphic to 
$\bS^1$, (b) studying the distortion properties of $F$, and (c) extending the
construction to all dimensions $n \geq 3$. We begin, however, with a remark on
the differences between constructing examples between Euclidean spaces or
between $n$-spheres.

\begin{remark}\label{remark:Counter}
  For a domain with punctures mappings with compact branch sets are, perhaps surprisingly, 
  easier to construct. For example; we can take the winding map in $\R^3$,
  $(r,\phi,z) \mapsto (r, 2 \phi, z)$, and extend it to a branched cover
  $f \colon \bS^3 \to \bS^3$ with a branch set homeomorphic to $\bS^1$. By applying
  a M\"obius map we can modify the mapping to have branch set at the 'equator' 
and
  such that $f \inv \{ \infty \} = \{0,\infty\}$. The restriction of this mapping
  to $\bS^3 \setminus \{ 0,\ \infty\}$ yields a surjective quasiregular mapping
  $g \colon \R^3\setminus\{0\} \to \R^3$ with branch set homeomorphic to $\bS^1$.
  Such mapping $g$ cannot, however, be modified in a small neighbourhood
  of the origin to produce a branched cover $\R^3 \to \R^3$ with a compact branch.
  For clarity, we extract this topological observation into Lemma \ref{lemma:Counter}.
\end{remark}

By a simple pole for a continuous mapping $f \colon \Omega \to \R^n$ we mean 
a point $a_0 \in \partial \Omega$ for which $\lim_{x \to a_0} \| f(x) \| = \infty$.
\begin{lemma}\label{lemma:Counter}
  Let $f \colon \R^n\setminus\{a_1, \ldots, a_k\} \to \R^n$ be a branched cover
  with simple poles at the points $a_1, \ldots, a_k$. Then there exists a radius $r_0 > 0$
  such that there is no branched cover $g \colon \R^n \setminus \{ a_2, \ldots, a_k \} \to \R^n$ that agrees 
  with $f$ on the set $\R^n \setminus B(a_1,r_0)$.
\end{lemma}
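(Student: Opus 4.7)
The plan is to suppose such a $g$ exists and derive a sign contradiction from Brouwer degree theory at the pole $a_1$.

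First, I choose $r_0 > 0$ small enough that $\overline{B(a_1,r_0)}$ contains none of the points $a_2,\ldots,a_k$, that the quasiregular map $f$ extends through $a_1$ to a branched cover $\hat f \colon B(a_1,r_0) \to \bS^n$ with $\hat f(a_1) = \infty$ (standard removability for quasiregular maps with poles), and that $0 \notin \hat f\bigl(\overline{B(a_1,r_0)}\bigr)$; the last item is achievable because $\|f(x)\| \to \infty$ as $x \to a_1$. Write $d := i(a_1,\hat f) \geq 1$ for the local index of $\hat f$ at the pole.

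Assuming $g$ as in the statement exists, I consider the two continuous maps $\hat f|_{\overline{B(a_1,r_0)}}$ and $g|_{\overline{B(a_1,r_0)}}$, both regarded as maps into $\bS^n$; they agree on $\partial B(a_1,r_0)$ with $f$. Gluing two copies of $\overline{B(a_1,r_0)}$ along their boundary to form $\bS^n$, and using $\hat f$ on one hemisphere and $g$ on the other (with reversed domain orientation), yields a continuous map $F \colon \bS^n \to \bS^n$ whose Brouwer degree satisfies
\[
\deg(F) = \deg\bigl(\hat f|_{\overline{B(a_1,r_0)}}, y\bigr) - \deg\bigl(g|_{\overline{B(a_1,r_0)}}, y\bigr)
\]
for every $y \in \bS^n \setminus f(\partial B(a_1,r_0))$. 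Evaluating at $y = \infty$: the first degree is $d$ (unique preimage $a_1$ of local index $d$) and the second is $0$ (as $g$ maps into $\R^n$), so $\deg(F) = d$. Evaluating instead at $y = 0$: the first degree vanishes by our choice of $r_0$, and so $\deg\bigl(g|_{\overline{B(a_1,r_0)}}, 0\bigr) = -d < 0$.

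On the other hand, $g$ is a branched cover agreeing with the sense-preserving quasiregular map $f$ on the nonempty open set $\R^n \setminus \overline{B(a_1,r_0)}$; since the domain $\R^n \setminus \{a_2,\ldots,a_k\}$ is connected, $g$ is sense-preserving throughout, and each local index $i(x,g)$ is a positive integer. Therefore
\[
\deg\bigl(g|_{\overline{B(a_1,r_0)}}, 0\bigr) = \sum_{x \in g^{-1}(0) \cap B(a_1,r_0)} i(x, g) \geq 0,
\]
contradicting the previous paragraph. The delicate point is the orientation bookkeeping in the gluing step, which reflects the mismatch between the outward orientation of a small neighborhood of $\infty$ in $\bS^n$ and the standard outward orientation of a large sphere in $\R^n$; once this is carried out, the positivity of local indices of the branched cover $g$ is incompatible with the negative degree forced by the pole.
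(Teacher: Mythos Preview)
Your argument is correct and takes a genuinely different route from the paper. The paper first passes to a \emph{normal domain} $U_1 = U(a_1,\hat f,r)$ for the extended map $\hat f$ (so that $\hat f(\partial U_1) = \partial B(\infty,r)$), picks $r_0$ with $B(a_1,r_0) \subset U_1$, and then argues purely from openness: since $g$ agrees with $\hat f$ on $U_1 \setminus B(a_1,r_0)$ one has $g(U_1) \cap B(\infty,r) \neq \emptyset$, while $\partial g(U_1) \subset g(\partial U_1) = \partial B(\infty,r)$ forces $g(U_1) \supset B(\infty,r)$, so $\infty \in g(U_1)$, a contradiction. Your approach replaces the normal-domain step by a degree computation: the gluing of $\hat f$ and $g$ over the ball makes $\deg(\hat f,B,y) - \deg(g,B,y)$ equal to the constant $\deg F$, and evaluating at $y=\infty$ and $y=0$ pins down $\deg(g,B,0) = -d$, incompatible with $g$ being sense-preserving. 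The paper's route is lighter on machinery (only openness, no degree theory) but leans on the normal-neighborhood lemma from Rickman's book; yours bypasses normal domains entirely and yields a quantitative obstruction, at the cost of invoking Brouwer degree and the sense-preserving/sense-reversing dichotomy for branched covers on connected domains.
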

\begin{proof}
  Since all of the points $a_1, \ldots, a_k$ are simple poles, 
  the mapping $f$ extends naturally to a mapping $\hat f \colon \bS^n \to \bS^n$
  such that 
  \begin{align*}
    \hat f \inv \{ \infty \} 
    = \{ \infty, a_1, \ldots, a_k \}.
  \end{align*}
  For brevity, we denote $a_0 \colonequals \infty$.
  The mapping $\hat f$ is a branched cover, so there exists
  by \cite[Lemma I.4.9]{Rickman-book} arbitrarily small radii $r > 0$ such that
  \begin{align*}
    \hat f \inv B(\infty,r)
    = \bigcup_{j=0}^{k} U(a_j,\hat f,r),
  \end{align*}
  where $U(x,\hat f,r)$ denotes the $x$-component of the set 
  $\hat f \inv B(f(x),r)$. Furthermore by
  \cite[Lemma I.4.9]{Rickman-book} we may assume that the domains
  $U(a_j,\hat f, r)$ disjoint and that they
  are normal domains,
  i.e.\ 
  \begin{align*}
    \hat f \partial U(a_j,\hat f r) 
    =\partial \hat f U(a_j,\hat f r)     
    = \partial B(\infty,r)
  \end{align*}
  for $j = 1, \ldots, k$.
  Excepting $a_0$, these normal domains $U(a_j, \hat f, r)$ in $\bS^n$ correspond to neighbourhoods
  $\hat U_j \subset \R^n$ of the poles of the original mapping $f$. We choose $r_0 > 0$ to be such that
  $B(a_j, r_0) \subset \hat U_j$ for each $j=1, \ldots, k$. 

  Towards contradiction suppose there exists a branched cover
  \begin{align*}
    g \colon \R^n \setminus \{ a_2, \ldots, a_k \} \to \R^n
  \end{align*}
  that agrees with $f$ on the set $\R^n \setminus B(a_1,r_0)$ 
  and denote $U_1 \colonequals U(a_1,\hat f , r)$. Then 
  $g$ also extends as a mapping $\hat g \colon \bS^n \to \bS^n$
  and agrees with $\hat f$ on the boundary $\partial U_1$
  which maps under $\hat f$, and thus under $\hat g$, onto $\partial B(\infty,r)$.
  Since the mapping $\hat g$ is open, $\hat g U_1 \cap B(\infty,r) \neq \emptyset$.
  On the other hand, also by openness of $\hat g$, since no interior point of $U_1$ can be mapped
  into a boundary point of the image $gU_1$, we have $\partial \hat gU_1 = \partial B(\infty,r)$.
  This implies that $\hat g|_{U_1} \colon U_1 \to B(\infty,r)$ is surjective, 
  so especially $\infty \in \hat g U_1$
  which is a contradiction since the mapping $g$ has no poles in $U_1$.
  Thus the original claim holds true.
\end{proof}

\subsection{Construction in $\R^3$}
\label{sec:3Dconstruction}

We construct first the mapping $F$ in three dimensions. The mapping $F$ is basically a sectorial winding around $\mathbb{S}^{1}$. In order to calculate the distortion we need to give an explicit construction. 

By $T_\alpha$ we denote for each $\alpha \in [0,2\pi)$ the half plane forming 
angle $\alpha$ with the plane $T_0=\{(x,0,z): x\geq0\}.$
\begin{comment}

\begin{align*}
  T_\alpha 
  \colonequals 
  \left\{ (x,y,z) \in \R^3 : \tan \inv \left(\frac{y}{x}\right) = \alpha \right\}.
\end{align*}
 
\end{comment}
The mapping $F$ will map each half-plane $T_\alpha$ onto itself, i.e.\
$F T_\alpha = T_\alpha$ for all $\alpha \in [0,2\pi)$,
and the restrictions $F|_{T_\alpha}$ will be topologically equivalent to the complex winding map $z \mapsto z^2$.
We define our mapping on each of the closed half-planes $\overline{T_\alpha}$. For simplicity,
we canonically identify each $\overline{T_\alpha}$, $\alpha \in [0,2\alpha)$, 
with 
\begin{align*}
  T = \{ (x,y) \in \R^2 \mid x \geq 0\}
\end{align*}
such that the positive $z$-axis of $\R^3$ within
$\overline{T_\alpha}$ corresponds to the positive $y$-axis of the plane within $T$.
With this identification the restrictions $F|_{\overline{T_\alpha}}$, $\alpha \in 
[0,2\pi)$ will all be equal
and we denote any and all of the restrictions as $f \colon T \to T$.

The properties of the mapping $f$ are characterized by two homeomorphisms
\begin{align*}
  H \colon [1,\infty) \to (0,1],
  \quad \text{ and }
  \quad
  R \colon [1,\infty) \to [1,\infty),
\end{align*}
with $H(1)=R(1)=1$.
For $f$ to be a branched cover it suffices to to have any such mappings, e.g.\ $R(t) = t$ and $H(t) = t \inv$
for all $t \in [1, \infty)$; see however Section \ref{sec:FiniteDistortion}
for finer properties of $F$ that depend on more subtle choices for the control functions
$H$ and $R$.

On the half space $T$ we fix the open cone 
\begin{align*}
  C 
  \colonequals 
  \left\{
  (x,y) \in T : |y|+1 < x
  \right\}
\end{align*}
which contains the part $(1,\infty)$ of the $x$-axis,
see Figure \ref{fig:construction}. The complement of this cone in $T$ can be expressed as a
union of horizontal line segments, 
\begin{align*}
  T \setminus C 
  = \bigcup_{t \in \R} [0,1+|t|] \times \{t\},
\end{align*}
and we define $f$ on these line segments affinely such that
\begin{align*}
  [0,1+|t|] \times \{ t \} \mapsto [0,R(1+|t|)] \times \{\operatorname{sgn}(t)( R(1+|t|)-1 )\},
\end{align*}
and so that the $y$-axis is mapped onto itself.
The inside of the cone has a stratification as vertical line segments $I_r$, $r>1$, with endpoints on the boundary
of the cone and intersecting the positive $x$-axis at $(r,0)$. We fix inside the cone
two smaller cones which divide each line segment $I_r$ into five subintervals in equal ratios.
These line segments are mapped affinely onto segments forming a rectangle as in Figure 
\ref{fig:construction} with the first and fifth part mapped 
on top of the interval $I_{R(r)}$ and the third interval intersecting the $x$-axis at $(H(r),0)$.

\begin{figure}[h!]
  \centering
  \resizebox{\textwidth}{!}{
    \begin{tikzpicture}

      % Changing this scale changes the font size of mathematical symbols.      
      \begin{scope}[scale=1.3]

        % Left block
        \begin{scope}[shift={(-5,0)}]
          
          % Points to control the main sector
          \coordinate(a) at (4,3);
          \coordinate(b) at (4,-3);
          
          % Points to control the inner sector
          \coordinate(A) at (4, 9/5);
          \coordinate(B) at (4,-9/5);

          % Points to control the innest sector
          \coordinate(AA) at (4, 3/5);
          \coordinate(BB) at (4,-3/5);

          % Background
          \draw[fill,black!10] (-0.5,-3)--(-0.5,3)--(a)--(1,0)--(b)--(-0.5,-3);
          \draw[dashed] (-0.5,-3) -- (-0.5,3);
          \draw (-0.5,0) -- (4,0);

          % Origin and winding point
          \draw[fill] (-0.5,0) circle [radius=0.03];
          \node[below left] at (-0.5,0) {$0$};
          \draw[fill] (1,0) circle [radius=0.03];
          \node[below left] at (1,0) {$1$};

          % Main sector
          \draw (1,0) -- (a);
          \draw (1,0) -- (b);

          % Inner sector
          \draw[dashed] (1,0) -- (A);
          \draw[dashed] (1,0) -- (B);

          % Innest sector
          \draw[dashed] (1,0) -- (AA);
          \draw[dashed] (1,0) -- (BB);

          % The mapped line
%          \draw[thick] (3,-2)--(3,2);
          \draw[thick,-{latex}] (3,-2  )--(3,-6/5);
          \draw[thick,-{latex}] (3,-6/5)--(3,-2/5);
          \draw[thick,-{latex}] (3,-2/5)--(3,2/5);
          \draw[thick,-{latex}] (3, 2/5)--(3,6/5);
          \draw[thick,-{latex}] (3, 1  )--(3,2);

          \draw[fill] (3,0) circle [radius=0.03];
          \node[below right] at (3,0) {$r$};

        \end{scope}

      % The function arrow for $f$.
      \draw[->, thick] (-0.5,1) to [out=10,in=170] (1,1);
      \node [below] at (0.25,1.5) {$f$};

      % Right Block
      \begin{scope}[shift={(2,0)}]

          % Points to control the main sector
          \coordinate(a) at (4,3);
          \coordinate(b) at (4,-3);
          
          % Points to control the inner sector
          \coordinate(A) at (4,1.5);
          \coordinate(B) at (4,-1.5);

          % Background
          \draw[fill,black!10] (-0.5,-3)--(-0.5,3)--(a)--(1,0)--(b)--(-0.5,-3);
          \draw[dashed] (-0.5,-3) -- (-0.5,3);
          \draw (-0.5,0) -- (4,0);

          % Origin and winding point
          \draw[fill] (-0.5,0) circle [radius=0.03];
          \node[below left] at (-0.5,0) {$0$};
          \draw[fill] (1,0) circle [radius=0.03];
          \node[below left] at (1,0) {$1$};

          % Main sector
          \draw (1,0) -- (a);
          \draw (1,0) -- (b);

          % Mapped line
          \draw[thick,-{latex}] (3,-2) -- (3,2);
          \draw[thick,-{latex}] (3,2) -- (0.5,2);
          \draw[thick,-{latex}] (0.5,2) -- (0.5,-2.1);
          \draw[thick,-{latex}] (0.5,-2.1) -- (3.1,-2.1);
          \draw[thick,-{latex}] (3.1,-2.1) -- (3.1,2);

          \draw[fill] (3,0) circle [radius=0.03];
          \node[below right] at (3,0) {$R(r)$};

          \draw[fill] (3,2) circle [radius=0.03];
          \node[right] at (3,2) {$(R(r),R(r)-1)$};
          
          \node[above left] at (0.5,0) {$H(r)$};
          \draw[fill] (0.5,0) circle [radius=0.03];          

        \end{scope}

      \end{scope}
    \end{tikzpicture}
  }
  \caption{Constructing the restrictions $f \colonequals F|_{T_\alpha}$ in the construction of Section \ref{sec:construction}.}
  \label{fig:construction}
\end{figure}
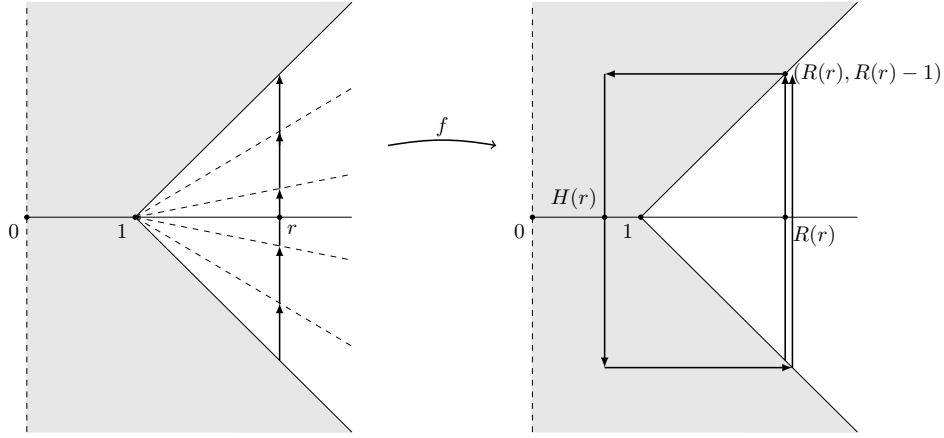

A moment's thought shows that by defining $F$ on each of the closed half-planes $\overline{T_\alpha}$ as above
we receive a continuous, open and discrete mapping which is topologically equivalent
to the winding mapping on each half plane $T_\alpha$, and with branch set
\begin{align*}
  B_F 
  = \{ (x,y,z) \in \R^3 : \| x \|^2 + \| y \|^2 = 1, z = 0 \}
  \simeq \bS^1.
\end{align*}

\subsection{Distortion properties of $F$.}
\label{sec:FiniteDistortion}

The branched cover $F \colon \R^3 \to \R^3$ built in Section \ref{sec:3Dconstruction}
has a compact branch set but by our main results
it is not a quasiregular mapping, or even a mapping of finite distortion satisfying
the distortion assumptions of Theorem \ref{thm:Main}.
However, in this section we show that by setting $R(r) = \log^{\eps/4}(r)$ and $H(r) = \log^{-\eps/4}(r)$,
the distortion of the mapping $F$ satisfies $K_F(x) \leq C_\eps \log^{1+\eps}(|x|)$ for a.e.\ $x \in \R^n \setminus B(0,2)$
and is bounded in $B(0,3)$. Note that even though the restriction of the mapping $F$ to any ball $B(0,r)$ is
quasiregular, these restrictions are not proper for $r>1$ and thus do 
not give a solution to the conjecture of Vuorinen in \cite[p. 193, (4)]{Vuorinen}.

Since the mapping $F$ is symmetric with respect to the half-planes $T_\alpha$,
it suffices to study distortion of $F$ on the half-plane
\begin{align*}
  T_0 
  = (0,\infty) \times \{ 0 \} \times \R,
\end{align*}
i.e.\ it suffices to study the distortion of $F$ on points $(x,y,z)$ such that
$x > 0$ and $y=0$. We denote $f(x,z) = F(x,0,z)$ and consider
$f$ as a mapping from the right half-plane to itself.
We denote its component functions as $f_x$ and $f_z$.

We note first that as $FT_0 \subset T_0$, we have 
\begin{align*}
  D_x F_y (x,0,z) 
  = 0 
  = D_z F_y (x,0,z)
\end{align*}
for all $(x,z) \in \R_+ \times \R$.
Furthermore the symmetric structure of the mapping with respect to the half-planes
$T_\alpha$ implies regularity
for the derivative in the $y$-direction and so
\begin{align*}
  D_y F_x (x,0,z) 
  = 0 
  = D_y F_z (x,0,z)
\end{align*}
for all $(x,z) \in \R_+ \times \R$.
Finally, again by the symmetric structure of $F$,
horizontal circles centered around the $z$-axis 
are mapped in to horizontal circles, more precisely
\begin{align*}
  \{ (x,y,z) : z = t, \| (x,y) \| = r \}
  \overset{F}{\mapsto}
  \{ (x,y,z) :  z = f_z(r,t), \| (x,y) \| = f_x(r,t) \}.
\end{align*}
This implies that $D_y F_y(x,y,z) = \frac{f_x(x,y)}{x}$
for all $(x,z) \in \R_+ \times \R$, see Figure \ref{fig:TopDownView} for details.

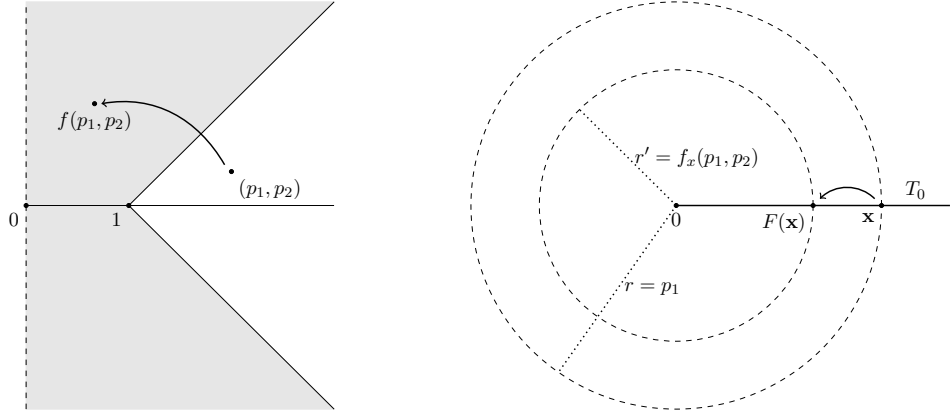
\begin{figure}[h!]
  \centering
  \resizebox{\textwidth}{!}{
    \begin{tikzpicture}

      % Changing this scale changes the font size of mathematical symbols.      
      \begin{scope}[scale=1.3]

        % Left block
        \begin{scope}[shift={(-5,0)}]
          
          % Points to control the main sector
          \coordinate(a) at (4,3);
          \coordinate(b) at (4,-3);
          
          % Background
          \draw[fill,black!10] (-0.5,-3)--(-0.5,3)--(a)--(1,0)--(b)--(-0.5,-3);
          \draw[dashed] (-0.5,-3) -- (-0.5,3);
          \draw (-0.5,0) -- (4,0);

          % Origin and winding point
          \draw[fill] (-0.5,0) circle [radius=0.03];
          \node[below left] at (-0.5,0) {$0$};
          \draw[fill] (1,0) circle [radius=0.03];
          \node[below left] at (1,0) {$1$};

          % Main sector
          \draw (1,0) -- (a);
          \draw (1,0) -- (b);

          \coordinate (XY) at (2.5,0.5);
          \coordinate (fXY) at (0.5,1.5);

          \draw[fill] (XY) circle [radius=0.03];
          \node[below right] at (XY) {$(p_1,p_2)$};
          
          \draw[fill] (fXY) circle [radius=0.03];
          \node[below] at (fXY) {$f(p_1,p_2)$};

          \draw[->,thick] ($(XY) + (-0.1,0.1) $) to [out=120, in=10] ($(fXY) + (0.1,0)$);
        \end{scope}

        % Right Block
        \begin{scope}[shift={(4,0)}]
          
          \draw[fill] (0,0) circle [radius=0.03];
          \node[below] at (0,0) {$0$};
          
          \draw[fill] (2,0) circle [radius=0.03];
          \node[below left] at (2,0) {$F(\mathbf{x})$};

        \draw[fill] (3,0) circle [radius=0.03];
        \node[below left] at (3,0) {$\mathbf{x}$};

        \draw[thick] (0,0) -- (4,0);
        \draw[dashed] (0,0) circle [radius=2];

        \draw[dotted,thick] (0,0) -- (135:2);
        \node[right] at (135:1) {$r'=f_x(p_1,p_2)$} ;
        \draw[dashed] (0,0) circle [radius=3];
        \draw[dotted,thick] (0,0) -- (235:3);
        \node[right] at (235:1.5) {$r=p_1$} ;
        
        \node[above] at (3.5,0) {$T_0$};

        \draw[->,thick] ($(3,0) + (-0.1,0.1) $) to [out=135, in=45] ($(2,0) + (0.1,0.1)$);
      \end{scope}

      \end{scope}
    \end{tikzpicture}
  }
  \caption{Behaviour of the mapping $F$ seen within the half-space $T_0$ and from above.}
  \label{fig:TopDownView}
\end{figure}

We now see that by combining the properties
above we have
\begin{align}\label{eq:DF}
  DF(x,y,z) 
  = 
  \begin{bmatrix}
    D_x f_x(x,z) & 0 & D_z f_x(x,z) \\
    0 & \frac{f_x(x,z)}{x} & 0 \\
    D_x f_z(x,z) & 0 & D_z f_z(x,z) \\
  \end{bmatrix},
\end{align}
and so we especially note that
\begin{align*}
  J_F(x,0,z)
  = \frac{f_x(x,z)}{x} J_f(x,z),
  \quad
  \| DF(x,0,z) \|
  \leq \frac{f_x(x,z)}{x} + \| Df(x,z) \|.
\end{align*}
To calculate the distortion of $F$ we note
that in fact $f_z(x,-z) = -f_z(x,z)$ for all
$(x,z) \in \R_+ \times \R$, and so for the distortion
estimates it suffices
to study the case $z > 0$, i.e.\ the upper-right quadrant $U \colonequals (0,\infty)^2$. We divide
the domain $U$ into five parts based on the complement of the cone $C$ and its subdivision,
see again Figure \ref{fig:construction}:
\begin{align*}
  A   &= \{ (x,z) \in U \mid x - 1 < z\} \\
  I^5 &= \{ (x,z) \in U \mid \frac{3}{5}(x-1) < z < x-1  \} \\
  I^4 &= \{ (x,z) \in U \mid \frac{1}{5}(x-1) < z < \frac{3}{5}(x-1)  \} \\
  I^3 &= \{ (x,z) \in U \mid 0 < z < \frac{1}{5}(x-1)  \} \\
  S   &= \{ (x,z) \in U \mid z = \frac{j}{5}(x-1), j =1,3,5\}.\\
\end{align*}
The set $S$ consists of finitely many rays and thus has measure zero, and we
can omit it in our a.e.\ distortion estimates. On the rest four domains
we can calculate an exact expression for the two component functions $f_x$ and $f_z$ of $f$:
\begin{align*}
  f_x(x,z)
  = 
  \begin{cases}
     \frac{x}{z+1} R(z+1) ,& (x,z) \in A \\
     R(x) , & (x,z) \in I^5 \\
     \left( 5\frac{z}{x-1} - 1\right)\frac{R(x)}{2} - 
\left(5\frac{z}{x-1} - 3\right) \frac{H(x)}{2}, & (x,z) \in I^4 \\
     H(x) , & (x,z) \in I^{3} \\
  \end{cases},
\end{align*}
and
\begin{align*}
  f_z(x,z)
  =
  \begin{cases}
     R(z+1) -1 , & (x,z) \in A \\
     (R(x)-1) (5\frac{z}{x-1} - 4) , & (x,z) \in I^5 \\
     -R(x) +1 , & (x,z) \in I^4 \\
     -5 \frac{z}{x-1}(R(x)-1) , & (x,z) \in I^{3} \\
  \end{cases},
\end{align*}
Note that  we have $\frac{x}{z+1} \in (0,1)$ for all $(x,z) \in A$ and
$\frac{z}{x-1} \in (0,1)$ for all $(x,z) \in (0,\infty)^2 \setminus A$.
With the aid of the above exact expressions for $f$ and \eqref{eq:DF} it follows that
\begin{align}\label{eq:ExampleDistortion}
  K_F(x,0,z) 
  \lesssim
  \begin{cases}
    \frac{R(|z|)}{|z| R'(|z|)}, & (x,z) \in A \\
    \frac{R(|x|)}{|x| R'(|x|)}, & (x,z) \in I^5 \\
    \frac{R(|x|)^2}{|x| R'(|x|) H(|x|)}, & (x,z) \in I^4 \\
    \frac{R(|x|)^2}{|x| H'(|x|) H(|x|)}, & (x,z) \in I^{3} \\
  \end{cases}.
\end{align}
Thus, when we set $R(t) = \log^\eps(t)$ and $H(t) = \log^{-\delta}(t)$, for 
any $r \geq 2$
the distortion of $F$ at a point $(x,0,z)$, at thus at any point $(x,y,z)$ is bounded by a constant $K=K(r)$
when $\| (x,y,z) \| \leq r$ and 
bounded by $\log^{1 + 2\eps + 2\delta}(\| (x,y,z) \|)$ 
when $\| (x,y,z) \| > r$.

Finally we remark that since the functions $H$ and $R$ are continuously differentiable, 
so is the mapping $F$ outside the union of six rectifiable surfaces generated
by the rotation of the sets $S, -S \subset T_0$. Furthermore the mapping $F$ is $L(r)$-Lipschitz
in any ball $B(0,r)$.

\subsection{Extension to higher dimensions}

The construction of $F$ in three dimension was based on first defining
a mapping $f\colon T \to T$ from a closed half-space to itself such that
$f\partial T = \partial T$ and then extending this mapping to all of $\R^3$
by symmetry. This basic idea goes through in higher dimensions
as well, and we describe the extension in detail for $n=4$.

\begin{figure}
  \centering
  \resizebox{\textwidth}{!}{
    \begin{tikzpicture}

      % Changing this scale changes the font size of mathematical symbols.      
      \begin{scope}[scale=1.3]

        % Left block
        \begin{scope}[shift={(-6,-3)}]
          
          \coordinate (A) at (0,5);
          \coordinate (B) at (5,0);
          \coordinate (O) at (0,0);

          \coordinate (a) at (2,5);
          \coordinate (b) at (5,2);
          \coordinate (o) at (1,1);          
          \coordinate (c) at (4,4);          
          \node[right] at (c) {$(r,r)$};          
          
          \draw[fill,black!10] (A)--(O)--(B)--(b)--(o)--(a)--cycle;

          \draw[dashed] (1,0)--(o)--(0,1);

          \draw[fill,white] (0.5,0.5) circle [radius=0.3];
          \node at (0.5,0.5) {$\id$};

          \node[left] at (0,1) {$1$};
          \node[below] at (1,0) {$1$};

          \draw[dashed] (A) -- (O) -- (B);
          \draw[dashed] (o) -- (5,5);

          \draw[thick] (a) -- (o) -- (b);

          \draw[<->] (0,4.5)--(1.85,4.5);
          \node[left] at (0,4.5) {$t$};

          % Origin and winding point
          \draw[fill] (O) circle [radius=0.03];
          \node[below left] at (O) {$0$};
          \draw[fill] (o) circle [radius=0.03];
          \draw[fill] (c) circle [radius=0.03];
          
          % \draw[gray, thin] (0,0) grid (5,5);          

          \draw[->,thick] (4,1.75)--(c)--(1.75,4);
        \end{scope}

      % The function arrow for $f$.
      \draw[->, thick] (-0.5,1) to [out=10,in=170] (1,1);
      \node [below] at (0.25,1.5) {$f$};

      % Right Block
      \begin{scope}[shift={(2,-3)}]
          
          \coordinate (A) at (0,5);
          \coordinate (B) at (5,0);
          \coordinate (O) at (0,0);

          \coordinate (a) at (2,5);
          \coordinate (b) at (5,2);
          \coordinate (o) at (1,1);          
          \coordinate (c) at (3,3);

          \draw[fill,black!10] (A)--(O)--(B)--(b)--(o)--(a)--cycle;

          \draw[dashed] (A) -- (O) -- (B);
          \draw[dashed] (o) -- (5,5);

          \draw[thick] (a) -- (o) -- (b);

          % Origin and winding point
          \draw[fill] (O) circle [radius=0.03];
          \node[below left] at (O) {$0$};
          \draw[fill] (o) circle [radius=0.03];
          \draw[fill] (c) circle [radius=0.03];
          \node[right] at (c) {$(R(r),R(r))$};

          %\draw[gray, thin] (0,0) grid (5,5);          

          \draw[->,thick] 
          (3,1.5)--
          (c)--
          (0.5,3)--
          (0.5,0.5)--
          (3.1,0.5)--
          (3.1,3.1)--
          (1.55,3.1);

          \draw[fill] (0.5,0.5) circle [radius=0.03];
          \draw[dashed] (0,0.5)--(0.5,0.5);
          \node[left] at (0,0.5) {$H(r)$};
          \draw[dashed] (0.5,0)--(0.5,0.5);
          \node[below] at (0.5,0) {$H(r)$};

          \draw[<->] (0,4)--(1.7,4);
          \node[left] at (0,4) {$R(t)$};
      \end{scope}
        
      \end{scope}
    \end{tikzpicture}
  }
  \caption{Modifying the restriction $F|_T$ in order to extend the mapping $F$ to dimension $4$.}
  \label{fig:DimensionN-1}
\end{figure}
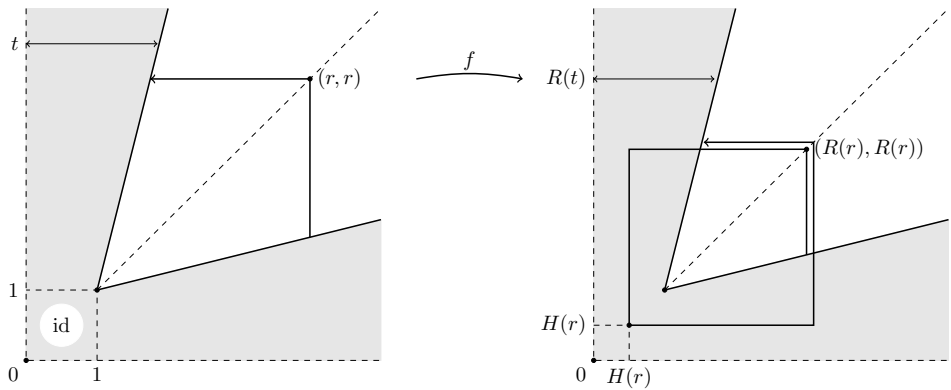

Note that since $\R^3 = \cup_\alpha T_\alpha$, we have
$
\R^4 
= \R \times \cup_\alpha T_\alpha
= \cup_\alpha \R \times T_\alpha,
$
and we may identify $\R \times T_\alpha$ with the half-space
$\{ (x,y,z) \in \R^3 \mid z > 0 \}$. Denote the closure of this half-space by $S$.
To imitate the previous construction we need to define a mapping $g \colon S \to S$ 
with a compact branch set such that $g \colon \partial S \to \partial S$.
One way to achieve this is to modify the 'sector windings' $F|_{T_\alpha}$ to be
defined only in the upper half of the half-space $T$; see Figure \ref{fig:DimensionN-1}.
Call this modification $g$ and set
\begin{align*}
  G \colon \R^4 \to \R^4,  \quad G|_{\R \times T_\alpha} = g.
\end{align*}
Now $G$ is a branched cover with branch set homeomorphic
to $\bS^1 \times B_g \simeq \bS^1 \times \bS^1$.
Furthermore we may imitate the distortion estimates of Section
\ref{sec:FiniteDistortion} for $G$; besides the fact that the sector is stratified
into a different number of line segments the calculations are similar
and yield comparable estimates.

Finally we remark that this procedure can be continued to generate
mappings in all higher dimensions. For the inductive step, to define
a mapping $H \colon \R^{n+1} \to \R^{n+1}$, we merely need to modify
the mapping of the previous step to reside only in a half-space.
This concludes the proof of Theorem \ref{thm:Example}.

\section{Final remarks}\label{FinalRemarks}

The proof of Proposition \ref{prop:ModulusIntegration} depends on the fact that
the distortion of $f$ satisfies 
\begin{align*}
  \frac{K_f(x)}{\log(|x|)} \to 0, \text{ as } |x| \to \infty
\end{align*}
and does not extend for mappings $f$ with $K_f(x) \lesssim \log(|x|)$. On the other hand
it can be shown that any selection of the functions $R$ and $H$ in the construction of
our example can not yield $K_f(x) \lesssim \log(|x|)$. Thus the methods here do not give
any information whether the branch set of a mapping of finite distortion with 
$K_f(x) \simeq \log(|x|)$ can be compact and non-empty.

The mapping $f \colon \R^3 \to \R^3$ constructed in Section \ref{sec:3Dconstruction} 
can be used to construct
for any $N \geq 1$ branched covers of almost logarithmic distortion such that the branch set is 
a disjoint collection of $N$ copies of $\bS^1$. It is also not hard to combine these mappings
to construct branch sets which are homeomorphic to finite collections of circles linked
in $\R^3$. These ideas even extend to constructing torus knots in the branch; with
more than one component of the branch this is straightforward, but with a careful
local argument a single component suffices when the degree of the map is sufficiently large. Indeed,
in \cite[Lemma 3.1 and Theorem 3.2]{ChurchHemmingsen2} Church and Hemmingsen demonstrate
how a knotted branch may be achieved by modifying a 
winding map of sufficiently high degree between spheres near the branch set.
This local modification lends itself to our setting and
the mapping $f \circ f \colon \R^3 \to \R^3$
can be modified in a neighbourhood of the circular branch set to have a knotted branch.
We would be interested to know if a knotted branch set is possible with a branched cover of degree 2.

\section*{\textbf{Acknowledgments}}

We would like to thank Pekkas Koskela and Pankka for helpful remarks.

% \bibliographystyle{alpha}
% \bibliography{BranchBibliography}

\begin{thebibliography}{KKM{\etalchar{+}}03}

\bibitem[AIM09]{Astala-Iwaniec-Martin}
K.~ Astala, T.~Iwaniec, and G.~Martin. 
\newblock {\em Elliptic partial differential equations and quasiconformal
  mappings in the plane}, volume~48 of {\em Princeton Mathematical Series}.
\newblock Princeton University Press, Princeton, NJ, 2009.

\bibitem[AM71]{Agard-Marden}
S.~Agard and A.~ Marden.
\newblock A removable singularity theorem for local homeomorphisms.
\newblock {\em Indiana Univ. Math. J.}, 20:455--461, 1970/1971.

\bibitem[AP17]{AaltonenPankka}
M.~Aaltonen and P.~Pankka.
\newblock Local monodromy of branched covers and dimension of the branch set.
\newblock {\em Ann. Acad. Sci. Fenn. Math.}, 42(1):487--496, 2017.

\bibitem[BE78]{BernsteinEdmonds78}
I.~Berstein and A.~L. Edmonds.
\newblock The degree and branch set of a branced covering.
\newblock {\em Invent. Math.}, 45(3):213--220, 1978.


\bibitem[Che64]{Ch1}
A.~V. Chernavski\u\i.
\newblock Finite-to-one open mappings of manifolds.
\newblock {\em Mat. Sb. (N.S.)}, 65 (107):357--369, 1964.

\bibitem[Che65]{Ch2}
A.~V. Chernavski\u\i.
\newblock Addendum to the paper ``{F}inite-to-one open mappings of manifolds''.
\newblock {\em Mat. Sb. (N.S.)}, 66 (108):471--472, 1965.

\bibitem[CH60]{ChurchHemmingsen1}
P.~T.~ Church and E.~Hemmingsen.
\newblock Light open maps on {$n$}-manifolds.
\newblock {\em Duke Math. J}, 27:527--536, 1960.

\bibitem[CH61]{ChurchHemmingsen2}
P.~T.~ Church and E.~Hemmingsen.
\newblock Light open maps on {$n$}-manifolds. {II}.
\newblock {\em Duke Math. J.}, 28:607--623, 1961.

\bibitem[CH63]{ChurchHemmingsen3}
P.~T.~ Church and E.~Hemmingsen.
\newblock Light open maps on {$n$}-manifolds. {III}.
\newblock {\em Duke Math. J.}, 30:379--389, 1963.

\bibitem[Edm78]{Edmonds78}
A.~L.~Edmonds.
\newblock Extending a branched covering over a handle.
\newblock {\em Pacific J. Math.}, 79(2):363--369, 1978.

\bibitem[Edm79]{Edmonds79}
A.~L.~Edmonds.
\newblock Deformation of maps to branched coverings in dimension three.
\newblock {\em Math. Ann.}, 245(3):273--279, 1979.

\bibitem[EG92]{EvansGariepy}
L.~C.~Evans and R.F.~Gariepy.
\newblock {\em Measure theory and fine properties of functions}.
\newblock Studies in Advanced Mathematics. CRC Press, Boca Raton, FL, 1992.

\bibitem[Hei02]{HeinonenICM}
J.~Heinonen.
\newblock The branch set of a quasiregular mapping.
\newblock In {\em Proceedings of the {I}nternational {C}ongress of
  {M}athematicians, {V}ol. {II} ({B}eijing, 2002)}, pages 691--700. Higher Ed.
  Press, Beijing, 2002.

\bibitem[HK14]{HenclKoskela}
S.~Hencl and P.~Koskela.
\newblock {\em Lectures on mappings of finite distortion}, volume 2096 of {\em
  Lecture Notes in Mathematics}.
\newblock Springer, Cham, 2014.

\bibitem[HP04]{HolopainenPankka-MFD}
I.~Holopainen and P.~Pankka.
\newblock Mappings of finite distortion: global homeomorphism theorem.
\newblock {\em Ann. Acad. Sci. Fenn. Math.}, 29(1):59--80, 2004.

\bibitem[HR98]{HeinonenRickman-S3}
J.~Heinonen and S.~Rickman.
\newblock Quasiregular maps {$\bold S^3\to \bold S^3$} with wild branch sets.
\newblock {\em Topology}, 37(1):1--24, 1998.

\bibitem[HR02]{HeinonenRickman}
J.~Heinonen and S.~Rickman.
\newblock Geometric branched covers between generalized manifolds.
\newblock {\em Duke Math. J.}, 113(3):465--529, 2002.

\bibitem[HR12]{HenclRajala-Survey}
S.~Hencl and K.~Rajala.
\newblock Openness and discreteness for mappings of finite distortion.
\newblock {\em Atti Accad. Naz. Lincei Rend. Lincei Mat. Appl.},
  23(4):429--435, 2012.

\bibitem[IM01]{IwaniecMartin}
T.~Iwaniec and G.~Martin.
\newblock {\em Geometric function theory and non-linear analysis}.
\newblock Oxford Mathematical Monographs. The Clarendon Press, Oxford
  University Press, New York, 2001.

\bibitem[KKM{\etalchar{+}}03]{KauhanenKoskelaMalyOnninenZhong}
J.~Kauhanen, P.~Koskela, J.~Mal\'y, J.~Onninen, and X.~Zhong.
\newblock Mappings of finite distortion: sharp {O}rlicz-conditions.
\newblock {\em Rev. Mat. Iberoamericana}, 19(3):857--872, 2003.

\bibitem[KO06]{KoskelaOnninen-CapacityAndModulus}
P.~Koskela and J.~Onninen.
\newblock Mappings of finite distortion: capacity and modulus inequalities.
\newblock {\em J. Reine Angew. Math.}, 599:1--26, 2006.

\bibitem[LP17]{LuistoPankka-Stoilow}
R.~Luisto and P.~Pankka.
\newblock Sto\"ilow's theorem revisited.
\newblock {\em Preprint }, {[arXiv:1701.05726]}, 2017.

\bibitem[MRV71]{MRV}
O.~Martio, S.~Rickman, and J.~V\"ais\"al\"a.
\newblock Topological and metric properties of quasiregular mappings.
\newblock {\em Ann. Acad. Sci. Fenn. Ser. A I}, (488):31, 1971.

\bibitem[Pan03]{Pankka-MFD}
P.~Pankka.
\newblock Mappings of finite distortion and weighted parabolicity.
\newblock In {\em Future trends in geometric function theory}, volume~92 of
  {\em Rep. Univ. Jyv\"askyl\"a Dep. Math. Stat.}, pages 191--198. Univ.
  Jyv\"askyl\"a, Jyv\"askyl\"a, 2003.

\bibitem[Ric85]{Rickman85}
S.~Rickman.
\newblock Sets with large local index of quasiregular mappings in dimension
  three.
\newblock {\em Ann. Acad. Sci. Fenn. Ser. A I Math.}, 10:493--498, 1985.

\bibitem[Ric93]{Rickman-book}
S.~Rickman.
\newblock {\em Quasiregular mappings}, volume~26 of {\em Ergebnisse der
  Mathematik und ihrer Grenzgebiete (3) [Results in Mathematics and Related
  Areas (3)]}.
\newblock Springer-Verlag, Berlin, 1993.

\bibitem[{Sto}28]{Stoilow}
S.~{Sto\"\i low}.
\newblock {Sur les transformations continues et la topologie des fonctions
  analytiques.}
\newblock {\em {Ann. Sci. \'Ec. Norm. Sup\'er. (3)}}, 45:347--382, 1928.

\bibitem[V{\"a}i66]{Vaisala}
J.~V{\"a}is{\"a}l{\"a}.
\newblock Discrete open mappings on manifolds.
\newblock {\em Ann. Acad. Sci. Fenn. Ser. A I No.}, 392:10, 1966.

\bibitem[V{\"a}i71]{VaisalaBook}
J.~V{\"a}is{\"a}l{\"a}.
\newblock {\em Lectures on {$n$}-dimensional quasiconformal mappings}.
\newblock Lecture Notes in Mathematics, Vol. 229. Springer-Verlag, Berlin,
  1971.

\bibitem[Vuo79]{Vuorinen2}
M.~Vuorinen.
\newblock Cluster sets and boundary behavior of quasiregular mappings.
\newblock {\em Math. Scand.}, 45(2):267--281, 1979.

\bibitem[Vuo88]{Vuorinen}
M.~Vuorinen.
\newblock {\em Conformal geometry and quasiregular mappings}, volume 1319 of
  {\em Lecture Notes in Mathematics}.
\newblock Springer-Verlag, Berlin, 1988.

\bibitem[Zor67]{Zorich}
V.~A. Zori{\v{c}}.
\newblock M. {A}. {L}avrent\cprime ev's theorem on quasiconformal space maps.
\newblock {\em Mat. Sb. (N.S.)}, 74 (116):417--433, 1967.

\end{thebibliography}
% \end{document}

\newcommand{\etalchar}[1]{$^{#1}$}
\def\cprime{$'$}\def\cprime{$'$}

\end{document}